\documentclass[12pt]{article}

\usepackage{amsmath, amsthm, amssymb}
\usepackage{graphicx}
\usepackage{microtype}
\usepackage{geometry}
\usepackage[colorlinks=true, linkcolor=blue, citecolor=blue, urlcolor=blue]{hyperref}

\newtheorem{theorem}{Theorem}[section]
\newtheorem{lemma}[theorem]{Lemma}

\DeclareMathOperator{\Mod}{Mod}

\title{Generating the mapping class group of a nonorientable
       surface of genus $g \geq 13$ by two elements}

\author{
  Berkay Aybak\thanks{Department of Mathematics,
    Middle East Technical University, Ankara, Turkey.}
  \and
  Hasan Ozden\thanks{Department of Mathematics,
    Mustafa Kemal University, Hatay, Turkey.}
}

\date{\today}
\begin{document}

\maketitle

\begin{abstract}
Let $N_g$ be a closed, connected, nonorientable surface of genus $g$.
We prove that for $g \ge 13$, the mapping class group $\Mod(N_g)$ can be
generated by exactly two elements.
This improves the previously known bound of $g \ge 19$.
\end{abstract}

\noindent\textbf{2020 Mathematics Subject Classification.}
Primary 57K20; Secondary 20F05, 57M07.

\medskip
\noindent\textbf{Keywords.}
Mapping class group, nonorientable surface, generating set,
crosscap transposition, Dehn twist.

\bigskip

%%------------------------------------------------------------
\section{Introduction}
%%------------------------------------------------------------

Let $N_g$ be a closed, connected, nonorientable surface of genus $g$.
The \emph{mapping class group} $\Mod(N_g)$ is the group of isotopy classes
of self-diffeomorphisms of $N_g$.
Finding minimal generating sets for mapping class groups is a long-standing
problem in geometric group theory and low-dimensional topology.

Because $\Mod(N_g)$ is non-abelian, every generating set must contain at
least two elements.
Szepietowski \cite{Szepietowski} proved that $\Mod(N_g)$ is generated by
three elements for all $g \ge 3$.
Later, Altun\"{o}z, Pamuk, and Y\i ld\i z \cite{APY} answered a question
posed in \cite[Problem~3.1(a)]{Farb} by showing that $\Mod(N_g)$ can be
generated by exactly two elements for $g \ge 19$, one of which is a periodic
rotation of order $g$.

The genus bound $g \ge 19$ in \cite{APY} is essentially a geometric
limitation: to isolate the necessary Dehn twists via the lantern relation,
the surface must be large enough so that the supports of the shifted
generators do not accidentally overlap.
In this note we lower the genus bound by choosing a different second
generator and employing a more efficient sequence of commutators.

Our main results are the following.

\begin{theorem}\label{thm:main}
  For $g \ge 14$, the mapping class group $\Mod(N_g)$ is generated by two
  elements, $T$ and $u_{g-4}A_2 C_2^{-1}$.
\end{theorem}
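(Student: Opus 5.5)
We follow the strategy of \cite{APY}, adapted to the new second generator. Here $T$ is the order-$g$ rotation with $T(a_i)=a_{i+1}$ on the standard chain of curves $a_1,\dots,a_{g-1}$, and $u_i$ is the crosscap transposition supported near crosscaps $i,i+1$. The curves $A_2$, $C_2$ are the two-sided curves used in the lantern configuration, so $A_2C_2^{-1}$ is a product of Dehn twists supported on a small subsurface near the first few crosscaps. Let $G=\langle T,\,F\rangle$ with $F=u_{g-4}A_2C_2^{-1}$. By Szepietowski's theorem \cite{Szepietowski}, together with the standard reduction used in \cite{APY}, it suffices to show that $G$ contains one Dehn twist $t_{a_1}$ and one crosscap transposition $u_1$. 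Indeed, conjugating by powers of $T$ then yields all $t_{a_i}$ and all $u_i$, and these generate $\Mod(N_g)$.

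\textbf{Step 1: separate the factors of $F$.} The support of $u_{g-4}$ lies near crosscaps $g-4,g-3$, while $A_2C_2^{-1}$ is supported near crosscaps $1,\dots,5$ or so. Conjugating $F$ by a suitable power $T^k$ shifts both supports. We choose $k$ so that the shifted twist part is disjoint from both pieces of $F$, while the shifted transposition meets exactly one piece, or neither, as needed. Commutators such as $F_1=F\,(T^kFT^{-k})^{\pm1}$ or $[F,T^kFT^{-k}]$ then cancel the disjoint parts and leave a product of twists with controlled support. This is where $g\ge14$ enters: the indices $1,\dots,6$, their shifts, and $g-4,g-3$ must all fit without unwanted overlaps modulo $g$.

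\textbf{Step 2: isolate a twist.} Using such an element, we aim to obtain an element of the form $t_xt_y^{-1}$ with $x,y$ disjoint nonseparating two-sided curves. We conjugate it by $F$, $T$, and previously constructed elements, which map $(x,y)$ to $(x',y)$ with $x'$ disjoint from $x$ and $y$. Products of these conjugates give $t_xt_{x'}^{-1}$ and similar elements. We then apply the lantern relation, as in \cite{APY}, to the seven curves of a lantern embedded in a sphere with four holes inside $N_g$. Writing the lantern relation as a product of such quotients of twists yields a single Dehn twist, hence $t_{a_1}\in G$ after conjugation. Once $t_{a_1}$ is available, all $t_{a_i}=T^{i-1}t_{a_1}T^{1-i}$ lie in $G$. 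In particular $A_2C_2^{-1}\in G$, since $A_2$ and $C_2$ are products of twists about curves in the $T$-orbit of the relevant standard curves, or are obtained from them by conjugation by twists already in $G$. Hence $u_{g-4}=F(A_2C_2^{-1})^{-1}\in G$, and conjugating by $T^{5-g}$ gives $u_1\in G$.

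\textbf{Main obstacle.} The main obstacle is Step 2: choosing the exact conjugating powers $T^k$ so that the required lantern curves appear with the correct disjointness for every $g\ge14$. This requires explicit tracking of curve images under $F$, which acts nontrivially through $u_{g-4}$ near crosscaps $g-4,g-3$. We will first try exponents such as $k=3,4$, which keep the shifted supports away from $g-4$ when $g\ge14$. We will then verify case by case that each intermediate commutator has the claimed support, and handle small residues of $g$ separately if a shift wraps around modulo $g$.
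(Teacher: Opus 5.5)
Your proposal has a genuine gap: the reduction it starts from is false, and your way of obtaining the crosscap transposition depends on it. Take $a_i$ to be the chain curve through crosscaps $i$ and $i+1$, as $T(a_i)=a_{i+1}$ requires. Look at the action on $H_1(N_g;\mathbb{Z}_2)$ with the crosscap basis $\mu_1,\dots,\mu_g$, where $\mu_i\cdot\mu_j=\delta_{ij}$.

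\begin{itemize}
\item The twist $t_{a_i}$ acts as the transvection $x\mapsto x+(x\cdot a_i)\,a_i$ with $a_i=\mu_i+\mu_{i+1}$. It just swaps $\mu_i$ and $\mu_{i+1}$.
\item The transpositions $u_i$ and the rotation $T$ also permute the $\mu_j$.
\end{itemize}

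So every element of $\langle T,t_{a_1},u_1\rangle$ acts by a permutation matrix. A twist about a curve through four crosscaps does not: $A_2$ (recall $a_2=\gamma_1$ passes through crosscaps $1,\dots,4$) sends $\mu_1$ to $\mu_2+\mu_3+\mu_4$. Hence $T$, all $t_{a_i}$ and all $u_i$ generate a proper subgroup of $\Mod(N_g)$. This is why the classical Lickorish--Chillingworth generators contain a twist about a four-crosscap curve, and why \autoref{thm:base} involves $A_1A_2^{-1}$.

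The same computation breaks your last step. $A_2$ does not lie in the subgroup generated by $T$, the chain twists and the $u_i$. So $t_{a_1}\in G$ gives you no access to $A_2C_2^{-1}$, and the claim $u_{g-4}=F(A_2C_2^{-1})^{-1}\in G$ is unsupported. The logic must run the other way: $A_2C_2^{-1}$ (equivalently $u_{g-4}$) has to be extracted directly from $F$ and $T$.

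Steps 1 and 2 are also a plan rather than a proof. You never give a conjugating exponent, a lantern configuration, or a disjointness check, and the products and commutators of $F$ with $T^kFT^{-k}$ are never shown to simplify.

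The missing idea, which the paper uses, is this. Conjugate one word by a product of two words so that one twist factor is replaced by another while the transposition and the remaining twist are untouched, then divide. This avoids both the lantern relation and single Dehn twists.

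\begin{itemize}
\item Set $G_2=T^3FT^{-3}=u_{g-1}\Gamma_4B_4^{-1}$.
\item Since $i(a_2,\gamma_4)=1$, \autoref{prop3.12} shows that $G_2F$ sends $\gamma_4$ to $a_2$. It also fixes $b_4$ and commutes with $u_{g-1}$.
\item Hence $(G_2F)G_2(G_2F)^{-1}=u_{g-1}A_2B_4^{-1}$, and dividing by $G_2$ leaves the pure twist quotient $A_2\Gamma_4^{-1}$.
\end{itemize}

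Shifting by powers of $T$, telescoping, and further conjugations of the same kind give $A_2\Gamma_2^{-1}$, then $A_2\Gamma_6^{-1}$, then $A_2C_2^{-1}$. From these you get $u_{g-4}$, every $u_j$, $B_1B_2^{-1}$ and $A_1A_2^{-1}$, which is the generating set of \autoref{thm:base}. The hypothesis $g\ge 14$ is what keeps the curves moved by these conjugators off the supports of $u_{g-4}$ and $u_{g-1}$.
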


\begin{theorem}\label{thm:main2}
  The mapping class group $\Mod(N_{13})$ is generated by two
  elements, $T$ and $u_9A_2 B_1^{-1}$.
\end{theorem}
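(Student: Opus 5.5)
We follow the strategy of the proof of Theorem~\ref{thm:main}, adapted to $g=13$. Model $N_{13}$ as a sphere with $13$ crosscaps arranged cyclically, so that the rotation $T$ of order $13$ sends the $i$th crosscap to the $(i+1)$st. Conjugation by $T$ then shifts indices: $T a_i T^{-1}=a_{i+1}$ for the standard two-sided curves and $T u_i T^{-1}=u_{i+1}$ for the crosscap transpositions, with indices taken modulo $13$. Set $F=u_9A_2B_1^{-1}$, and let $G=\langle T,F\rangle$. We use Szepietowski's generating set \cite{Szepietowski}: $\Mod(N_g)$ is generated by the Dehn twists $t_{a_i}$ together with one crosscap transposition (or crosscap slide). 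Hence it suffices to show that $G$ contains a single Dehn twist about a nonseparating two-sided curve and a single crosscap transposition, say $u_1$. Once $G$ contains one twist $t_{a_1}$, conjugating by powers of $T$ gives all $t_{a_i}$. Then $u_1=T^{-8}u_9T^{8}$, or the missing generator, is recovered from $F$ by cancelling the twist factors $A_2$ and $B_1$, which are products of twists along shifted curves.

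The steps are as follows. First, fix the curves: $A_2$, $B_1$ and $C_2$ are the products of Dehn twists on disjoint curve families, defined as in the setup used for Theorem~\ref{thm:main}. Record the supports, in crosscap indices, of $u_9$, $A_2$ and $B_1$, and check that $B_1$ is chosen so that these supports have the overlap pattern needed on only $13$ crosscaps, where $C_2$ would collide. Second, form $F_1=T^kFT^{-k}$ for a suitable $k$, and then the product $F_1F^{-1}$ or the commutator $[F,F_1]$. Choose $k$ so that the crosscap transpositions $u_9$ and $u_{9+k}$ have disjoint support from the twist factors they do not commute with. Using the commutation relations and $u\,t_c\,u^{-1}=t_{u(c)}^{\pm1}$, this product should reduce to a product of a few Dehn twists, of the form $t_{x}t_{y}^{-1}t_{z}t_{w}^{-1}$. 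Third, conjugate this element by further elements of $G$, namely $T$-shifts and $F$ itself, and multiply the results to cancel factors. The target is an element $t_{a}t_{b}^{-1}$ with $a,b$ disjoint nonseparating curves. Fourth, use the lantern relation, as in \cite{APY}, to isolate a single twist. Pick a lantern whose seven curves can be written as $G$-images of $a$ and $b$, so that $t_{a_i}\in G$ for all $i$.

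The main obstacle is the third and fourth steps at $g=13$. With so few crosscaps, the $T$-shifts of the supports of $A_2$ and $B_1$ are forced to overlap. One must therefore find specific shift exponents for which the commutators still collapse to two-twist elements, and a lantern embedded in $N_{13}$ whose boundary and interior curves are all reachable. My first attempt would be to compute the supports explicitly as index sets mod $13$. I would then search for shifts $k$ such that $\{9,9+k\}$ avoids the crosscaps used by $A_2$ and $B_1$ and their shifts except in one controlled place. The replacement of $C_2$ by $B_1$ is presumably designed exactly to make such a $k$ exist. If a direct lantern fails, I would instead show that $G$ contains $t_at_b^{-1}$ for all pairs of adjacent curves in a chain. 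I would then use the braid relation $t_at_bt_a=t_bt_at_b$ to extract $t_a$, via $t_a t_b^{-1}\cdot t_b t_c^{-1}$ and conjugation tricks. Finally, obtain $u_9=F B_1A_2^{-1}$ once the twists are in $G$, completing the generating set.
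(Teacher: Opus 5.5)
Your proposal has a genuine gap. It is a plan for a search rather than an argument, and the step it ultimately relies on is never carried out.

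\textbf{The endgame is left open.} You aim for Szepietowski-type generators (all twists $t_{a_i}$ plus one crosscap transposition), so you must put a \emph{single} Dehn twist into $G$. You propose to get it from a lantern ``whose seven curves can be written as $G$-images of $a$ and $b$.'' But you never give a lantern, an exponent $k$, or a computation of $[F,T^kFT^{-k}]$, so nothing is actually verified at $g=13$. The paper's introduction names exactly this lantern step as the geometric obstruction behind the old bound $g\ge 19$.

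\textbf{The fallback does not work.} Braid and commutation relations have the same total twist exponent on both sides. So no identity derived from them alone can write a single twist as a product of conjugates of balanced elements $t_at_b^{-1}$; on your route an inhomogeneous relation such as the lantern is unavoidable.

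\textbf{The setup is misread.} $A_2$, $B_1$, $C_2$ are single Dehn twists about the curves $a_2$, $b_1$, $c_2$, not products over curve families. Also, $T$ does not shift $a_i\mapsto a_{i+1}$; for instance $T^3(b_1)=c_2$ and $T^{k-1}(a_2)=\gamma_k$.

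\textbf{How the paper avoids this.} It never isolates a single twist. It reduces to \autoref{thm:base}, so it only needs the twist differences $A_1A_2^{-1}$, $B_1B_2^{-1}$ and the transposition $u_{12}$.
\begin{enumerate}
  \item It takes $H_2=T^3H_1T^{-3}=u_{12}\Gamma_4C_2^{-1}$.
  \item Conjugating $H_2$ by $H_2H_1$, which carries $\gamma_4$ to $a_2$ via \autoref{prop3.12} and leaves $c_2$ and $u_{12}$ alone, gives $H_3=u_{12}A_2C_2^{-1}$.
  \item Then $H_3H_2^{-1}=A_2\Gamma_4^{-1}$, with the transposition cancelled. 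Your second step (commuting $F$ with a $T$-conjugate to kill the transposition) is in this spirit.
\end{enumerate}

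\textbf{The missing $g=13$ idea is the wraparound.} Since $\gamma_{k+13}=\gamma_k$ and $a_2=\gamma_1$, one has $T^{-3}(A_2\Gamma_4^{-1})T^{3}=\Gamma_{11}A_2^{-1}$. The $T^3$-conjugates then telescope as $(\Gamma_2\Gamma_5^{-1})(\Gamma_5\Gamma_{11}^{-1})(\Gamma_{11}A_2^{-1})=\Gamma_2A_2^{-1}$. From this element the paper proceeds as follows:
\begin{enumerate}
  \item $T$-conjugation gives all $\Gamma_i\Gamma_{i+1}^{-1}$, hence $A_2\Gamma_6^{-1}$.
  \item Conjugating by $H_6H_3$ gives $A_2C_2^{-1}$.
  \item Then $u_{12}=H_3H_7^{-1}$, and from it $B_1C_2^{-1}$.
  \item Finally $B_1B_2^{-1}$ and $A_1A_2^{-1}$ follow exactly as in the proof of \autoref{thm:main}.
\end{enumerate}
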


%%------------------------------------------------------------
\section{Preliminaries and the main results}
%%------------------------------------------------------------

Throughout this paper, curves on $N_g$ and self-diffeomorphisms of $N_g$
are considered up to isotopy.
We denote the right-handed Dehn twist about a two-sided simple closed curve
$a$ by the corresponding capital letter $A$.
We use the standard model for $N_g$ as a sphere with $g$ crosscaps
$\mathcal{C}_1,\dots,\mathcal{C}_g$ arranged in a circular position,
and we let $T$ denote the rotation by $2\pi/g$ that maps $\mathcal{C}_i$
to $\mathcal{C}_{i+1}$ (indices taken modulo $g$).
For a model of the surface $N_g$ and the classical two-sided curves used in
what follows, we refer to \autoref{fig:model}.
The \emph{geometric intersection number} of two isotopy classes of curves $a$
and $b$ is
\[
  i(a,b) = \min\bigl\{|\alpha\cap\beta| : \alpha\in a,\,\beta\in b\bigr\}.
\]

For a family of two-sided simple closed curves, the union of their regular
neighborhoods is an orientable subsurface, which allows us to apply Dehn-twist
relations from the orientable setting; for these we always refer to
\cite{FarbMargalit}.

\begin{lemma}\label{commutativity}
  Let $a$ and $b$ be isotopy classes of two-sided simple closed curves.
  The following statements are equivalent\/\textup{:}
  \begin{enumerate}
    \item[\textup{(i)}]  $i(a,b)=0$,
    \item[\textup{(ii)}] $AB = BA$,
    \item[\textup{(iii)}] $A(b) = b$.
  \end{enumerate}
\end{lemma}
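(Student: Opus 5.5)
We prove the cycle of implications (i)$\Rightarrow$(ii)$\Rightarrow$(iii)$\Rightarrow$(i). Throughout, we tacitly assume the curves are essential, i.e.\ their Dehn twists are nontrivial: a curve bounding a disc or a M\"obius band has trivial twist and must be excluded. Where convenient we work inside an orientable regular neighborhood of $a\cup b$ and appeal to \cite{FarbMargalit}.

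\textbf{(i)$\Rightarrow$(ii).} If $i(a,b)=0$, we choose disjoint representatives $\alpha,\beta$ and disjoint annular neighborhoods of them. Each twist $A$, $B$ can be realized by a diffeomorphism supported in its own annulus. Diffeomorphisms with disjoint supports commute, so $AB=BA$.

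\textbf{(ii)$\Rightarrow$(iii).} We use the conjugation formula $fAf^{-1}=T_{f(a)}$. This holds on $N_g$ because a right-handed twist is defined using an orientation of a neighborhood of $a$, and $f$ carries that oriented annulus to an oriented annulus around $f(a)$. With $f=B$, the relation $AB=BA$ gives $T_{B(a)}=BAB^{-1}=A$. We then need injectivity of the map $c\mapsto T_c$ on essential two-sided curves, which on $N_g$ may require care with orientation conventions. Granting it, we get $B(a)=a$. Applying the same argument with the roles of $a$ and $b$ swapped (from $BAB^{-1}A^{-1}=1$ we also get $ABA^{-1}=B$) yields $A(b)=b$.

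\textbf{(iii)$\Rightarrow$(i).} This is the main step. We invoke the intersection formula for Dehn twists: for two-sided $a,b$ and any $k$,
\[
  i\bigl(A^k(b),b\bigr)=|k|\,i(a,b)^2 .
\]
In the nonorientable setting we would prove this via the bigon criterion, which holds on any surface, applied to the standard surgery picture of $A^k(b)$. If $A(b)=b$, the formula with $k=1$ gives $i(a,b)^2=i(b,b)=0$, hence $i(a,b)=0$.

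The real obstacle is justifying this formula, together with the injectivity used in (ii)$\Rightarrow$(iii), on $N_g$ rather than on an orientable surface. One option is to pass to the orientation double cover $S_{g-1}\to N_g$. There a two-sided curve $a$ lifts to two disjoint curves $\tilde a_1,\tilde a_2$, and $A$ lifts to the product of twists $\tilde A_1\tilde A_2^{-1}$, with opposite directions because the deck involution reverses orientation. Geometric intersection numbers multiply by $2$ under lifting, so the required statements reduce to the orientable results of \cite{FarbMargalit}, applied to products of twists about disjoint curves.
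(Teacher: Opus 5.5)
Your outline is the orientable argument of \cite{FarbMargalit}. That is also all the paper relies on: it states the lemma without proof and remarks that neighbourhoods of two-sided curves are orientable, a remark that fails in exactly the case that matters below. You are right that curves bounding a M\"obius band must be excluded, since for them the statement is false.

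\textbf{The gap.} The genuine gap is the formula at the centre of (iii)$\Rightarrow$(i): on $N_g$ the identity $i(A^k(b),b)=|k|\,i(a,b)^2$ is false in general, so neither of your justifications can work. The bigon criterion does hold; the problem is that the surgery picture need not be in minimal position.

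Fix a side of $b$ (possible since $b$ is two-sided), and fix the orientation of the annulus $U$ about $a$ that defines $A$. Each passage of $b$ through $U$ becomes a spiral. That spiral crosses the pushoff of its own passage either near the entry of $U$ or near the exit. Which one depends on whether the orientation of $U$ agrees or disagrees there with a fixed orientation of the annulus around $b$.

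Suppose this comparison is not the same at all points of $a\cap b$. Equivalently, some loop made of an arc of $a$ and an arc of $b$ is one-sided, i.e.\ the regular neighbourhood of $a\cup b$ is nonorientable; this happens for suitable generic two-sided curves meeting twice. Then somewhere along $b$ an exit-type passage is followed by an entry-type passage. The two crossings, together with the arc of $b$ between these passages, bound a thin strip, which is a bigon. Hence $i(A(b),b)\le i(a,b)^2-2$.

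\textbf{The double cover does not rescue this.} The lift $\tilde A_1\tilde A_2^{-1}$ has exponents of opposite signs. The multitwist estimate in \cite{FarbMargalit} (Prop.~3.4) assumes all exponents have the same sign, and without that assumption it is false. Writing $t_c$ for the twist about $c$: if $c_1,c_2$ are disjoint and $d$ meets each once, then $i(t_{c_1}t_{c_2}^{-1}(d),d)=0$, by the same strip. The injectivity of $c\mapsto T_c$ that you grant in (ii)$\Rightarrow$(iii) is normally derived from this same formula, so it inherits the problem.

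\textbf{How to repair it.} The lemma (for curves not bounding M\"obius bands) is still true, but you need the correct nonorientable input and a power of the twist. Stukow's nonorientable version of the formula is only an inequality, of the form $i(A^n(b),b)\ge(|n|-2)\,i(a,b)^2$ for generic two-sided $a$.
\begin{itemize}
\item For (iii)$\Rightarrow$(i): from $A(b)=b$ pass to $A^3(b)=b$, which gives $i(a,b)=0$.
\item For (ii)$\Rightarrow$(iii): avoid injectivity and go through (i). If $AB=BA$, then $A(B^n(a))=B^n(A(a))=B^n(a)$ for all $n$. The previous step then gives $i(a,B^3(a))=0$, and the same estimate for $B$ gives $i(a,b)=0$. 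Finally (i)$\Rightarrow$(iii) is your disjoint-support argument.
\item If you prefer the double cover, the orientable fact you actually need is different. A multitwist with nonzero exponents of arbitrary signs, about pairwise disjoint non-isotopic curves, fixes a curve only if that curve is disjoint from all of them (e.g.\ via canonical reduction systems). Apply this to $\tilde A^2$ and a component of the preimage of $b$. You must first check that $\tilde a_1$ and $\tilde a_2$ are non-isotopic; this fails only when $a$ bounds a M\"obius band or $g=2$.
\end{itemize}
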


\begin{lemma}\label{prop3.12}
  Let $a$ and $b$ be isotopy classes of two-sided simple closed curves with
  $i(a,b)=1$.
  Then
  \[
    AB(a) = b.
  \]
\end{lemma}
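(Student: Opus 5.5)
The plan is to reduce the statement to a computation inside an orientable subsurface, where the claim is the classical fact recorded in \cite{FarbMargalit}.

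Since $i(a,b)=1$, we can choose representatives $\alpha\in a$ and $\beta\in b$ meeting transversely in a single point. Both curves are two-sided, so a regular neighborhood $S$ of $\alpha\cup\beta$ is an orientable surface. It is a torus with one boundary component: a neighborhood of two two-sided annuli glued along a square. We fix the orientation of $S$ induced by the local orientations used to define the right-handed twists $A$ and $B$. Because $A$ and $B$ are supported in $S$, the relation $AB(a)=b$ only needs to be checked in $S$. Indeed, if $AB(\alpha)$ is isotopic to $\beta$ inside $S$, then the same isotopy works in $N_g$.

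Inside $S$ we argue directly with the twists. First, $B(\alpha)$ is obtained by surgery of $\alpha$ with $\beta$ at their single intersection point. Then $A(B(\alpha))$ is obtained by twisting this curve along $\alpha$. Alternatively, we identify $S$ with a punctured torus and $H_1$ with $\mathbb{Z}^2$, taking $a=(1,0)$ and $b=(0,1)$. The twists then act by the matrices
\[
A=\begin{pmatrix}1&1\\0&1\end{pmatrix},\qquad B=\begin{pmatrix}1&0\\-1&1\end{pmatrix}.
\]
Hence $AB(1,0)=A(1,-1)=(0,-1)$, which is $b$ as an unoriented curve. On the punctured torus, simple closed nonperipheral curves are determined up to isotopy by their primitive homology classes up to sign. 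So $AB(a)=b$ in $S$, and therefore in $N_g$.

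The cleanest way to record this is to use the braid relation $ABA=BAB$, which holds for $i(a,b)=1$ by \cite{FarbMargalit}. Together with Lemma~\ref{commutativity}(iii), applied in the form $B(b)=b$, it gives
\[
AB(a)=AB\,A(a)=BA\,B(a)=BA(B(a)).
\]
This identity by itself does not close the argument, so the matrix computation above is the actual verification. Equivalently, one can cite the statement in \cite{FarbMargalit} that $T_aT_b(a)=b$ whenever $i(a,b)=1$.

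The only real obstacle is sign and orientation bookkeeping. On a nonorientable surface, "right-handed" only makes sense after orienting a neighborhood of each curve. We must check that the orientations chosen for the neighborhoods of $a$ and $b$ extend to a single orientation of $S$. Otherwise one of the twists becomes left-handed, and we would get $AB^{-1}(a)=b$ instead. I would handle this by assuming, as is implicit in the paper's conventions and \autoref{fig:model}, that the twists are taken with respect to an orientation of $S$. With that convention the computation above applies verbatim.
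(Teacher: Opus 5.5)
Your proof is correct and follows the same route the paper takes implicitly. The paper gives no separate argument: per its remark that neighborhoods of two-sided curves are orientable, it reduces to the regular neighborhood of $a\cup b$ (a one-holed torus, orientable here because the two annuli meet in a single square) and invokes the classical fact $T_aT_b(a)=b$ from Farb--Margalit, which your homology computation plus the slope classification of curves in the one-holed torus verifies directly. Your caveat that $A$ and $B$ must be right-handed with respect to one common orientation of that torus is exactly the convention the paper leaves implicit (without it one gets $AB^{-1}(a)=b$ instead); the braid-relation paragraph is superfluous, and the fact used there is $A(a)=a$, not $B(b)=b$.
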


For nonorientable surfaces, Dehn twists alone do not suffice to generate
$\Mod(N_g)$; additional elements are needed.
Let $K$ be a subsurface of $N_g$ (for $g \geq 2$) homeomorphic to a Klein
bottle with one boundary component; $K$ contains exactly two crosscaps.
A \emph{crosscap transposition} is a self-homeomorphism of $N_g$ supported
on such a subsurface $K$ that interchanges its two crosscaps.
The effect of a crosscap transposition on an arc $c$ is illustrated in
\autoref{fig:crosscaptransposition}.
We write $u_{\mu,\alpha}$ or $u_i$ for a crosscap transposition depending on
the context; the notation $u_i$ stands for the transposition of the
$i$-th and $(i{+}1)$-th crosscaps. By $\alpha_i$, we denote the boundary for the support of the crosscap transposition $u_i$.

\begin{figure}[htbp]
  \centering
  \includegraphics[width=0.7\linewidth]{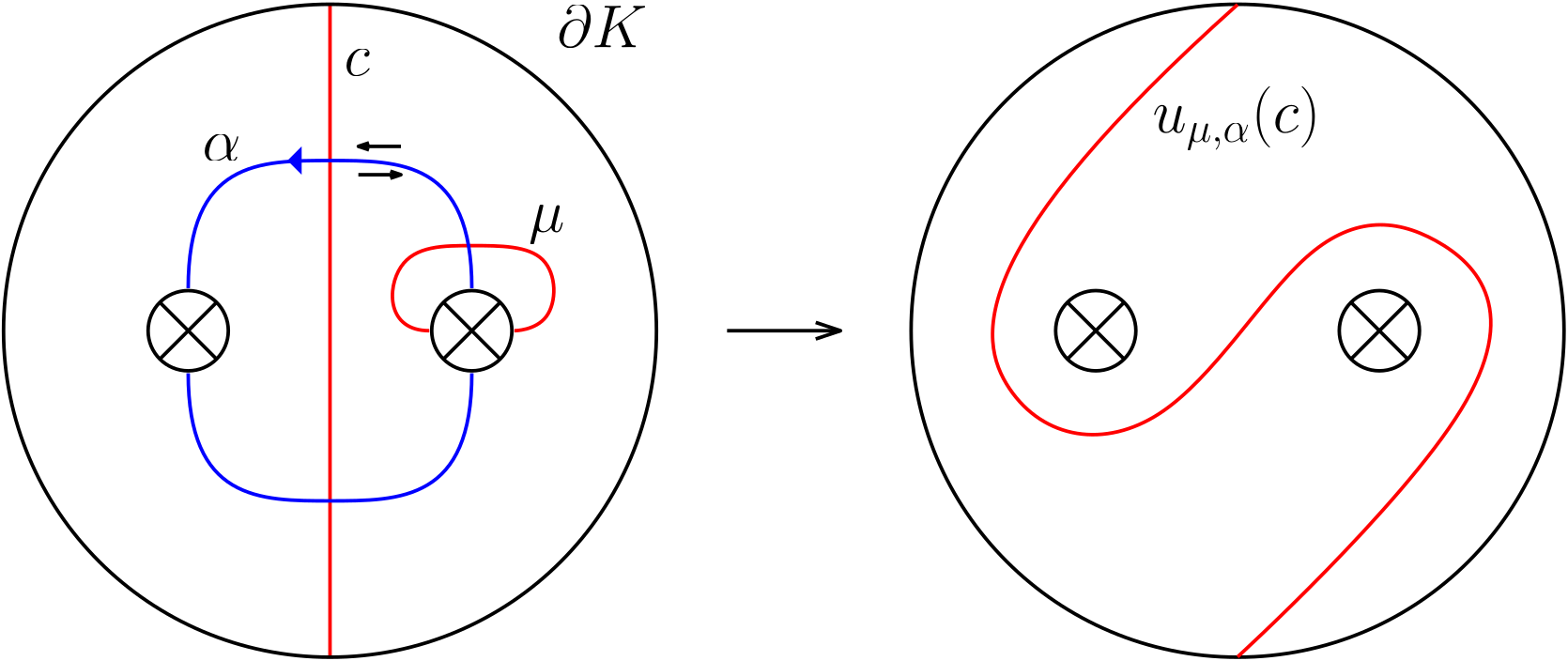}
  \caption{A crosscap transposition and the image of the arc $c$.}
  \label{fig:crosscaptransposition}
\end{figure}

The following standard conjugation relations will be used throughout; see
\cite[Lemma~0]{Chillingworth} and \cite[Relation~2.3]{LesniakSzepietowski}.

\begin{itemize}
  \item If $f(a) = b$, then $fAf^{-1} = B^s$, where $s = \pm 1$ according as
        $f$ preserves or reverses the local orientation.
  \item $f\,u_{\mu,\alpha}\,f^{-1} = u_{f(\mu),f(\alpha)}$.
\end{itemize}

Since the rotation $T$ maps $\mathcal{C}_i$ to $\mathcal{C}_{i+1}$, we have
in particular $Tu_i T^{-1} = u_{i+1}$.
Also, note that $T^{k-1}(a_2)=\gamma_{k}$, where $\gamma_{k}$ is a two-sided curve passing through $k$, $(k+1)$, $(k+2)$ and $(k+3)$ indexed crosscaps (modulo $g$).
\begin{figure}[htbp]
  \centering
  \includegraphics[width=1.0\linewidth]{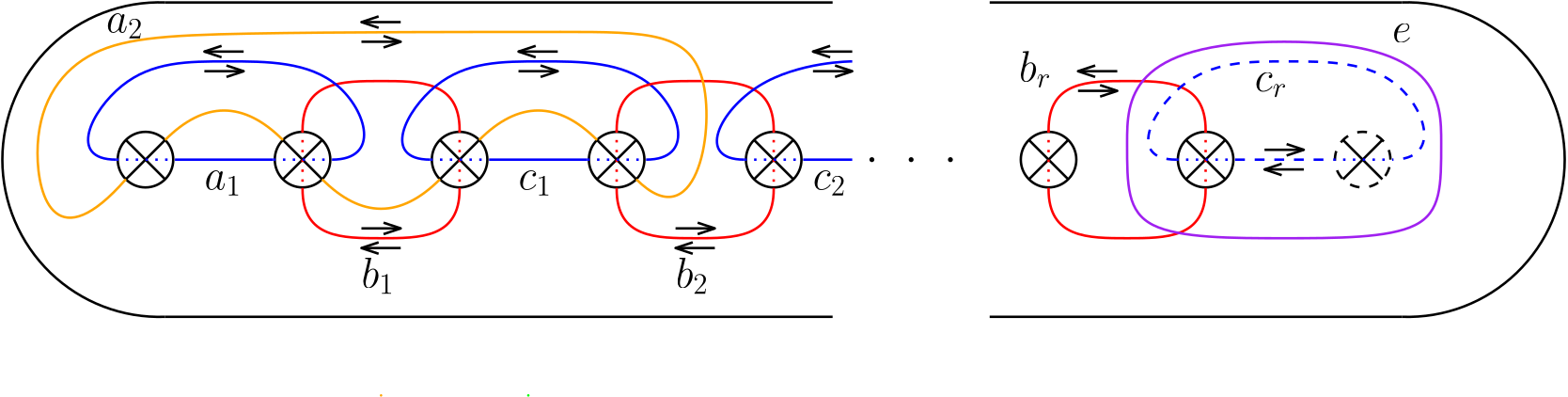}
  \caption{A model of $N_g$ for $g = 2r+2$ (or $g = 2r+1$ after removing
    the last crosscap; in that case the curve $c_r$ disappears) and some
    classical two-sided curves.}
  \label{fig:model}
\end{figure}

Our proof reduces to establishing the following generating set.

\begin{theorem}[\cite{APY}]\label{thm:base}
  For $g \ge 7$, the mapping class group $\Mod(N_g)$ is generated by
  $T$, $A_1A_2^{-1}$, $B_1B_2^{-1}$, and $u_{g-1}$.
\end{theorem}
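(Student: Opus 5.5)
The plan is to reduce to a known generating set of $\Mod(N_g)$ built from Dehn twists and one crosscap transposition. By the work of Szepietowski and Paris--Szepietowski, $\Mod(N_g)$ is generated by the Dehn twists $A_1,\dots,A_{g-1}$ about the chain curves, the Humphries-type twists $B$ (equivalently the $B_i$ of \autoref{fig:model}), and a single crosscap transposition, say $u_{g-1}$. Let $G=\langle T, A_1A_2^{-1}, B_1B_2^{-1}, u_{g-1}\rangle$. It then suffices to show that $G$ contains every $A_i$ and every $B_i$ appearing in that generating set. I will take this Dehn-twist-plus-transposition generating set as given.

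\textbf{Step 1 (translates).} Conjugating by powers of $T$ and using the conjugation relations recalled above, $G$ contains $T^k(A_1A_2^{-1})T^{-k}$. Since $T(a_i)=a_{i+1}$ in the circular model, this element is $A_{k+1}^{\pm1}A_{k+2}^{\mp1}$, and the same argument applies to the $b$-curves. Likewise $T^ku_{g-1}T^{-k}=u_{g-1+k}$, so every $u_i$ lies in $G$. The sign $\pm$ depends on whether $T$ preserves the local orientation of the relevant neighborhood. I will fix orientations of the $a_i$ and $b_i$ once and for all so that $T$ carries the chosen orientation of one to the next, and the exponents line up. If this fails for one family, I will compensate by multiplying by a suitable $u_i$, which reverses the local orientation.

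\textbf{Step 2 (isolating one twist).} This is the main obstacle: extracting a single Dehn twist from products of the form $A_iA_{i+1}^{-1}$. I would try the Korkmaz-type trick first. Set $F=A_1A_2^{-1}$ and find $h\in G$, built from the elements $A_jA_{j+1}^{-1}$, $B_jB_{j+1}^{-1}$ and $u_j$, with $h(a_1)=a_1$ and $h(a_2)=x$, where $x$ is a curve disjoint from $a_1$ and $a_2$. Then
\[
F\,(hFh^{-1})^{-1}=A_1A_2^{-1}XA_1^{-1}=A_2^{-1}X\in G .
\]
Repeating this with a second such element, chosen via \autoref{commutativity}, should telescope down to one twist.

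Concretely, consider $(B_1B_2^{-1})(A_2A_3^{-1})$, where $b_1$ meets $a_2$ once and is disjoint from $a_3$ and $b_2$. By \autoref{prop3.12} it moves $a_2$ toward $b_1$ while fixing the far curves. Combining such maps, I expect to obtain $A_iB_j^{-1}$ with $i(a_i,b_j)=1$. The braid relation together with \autoref{prop3.12} then gives $(A_iB_j^{-1})\,$-conjugates relating $A_i$ and $B_j$, and from these an individual $A_1\in G$. The hypothesis $g\ge 7$ is used here, to guarantee enough room for curves disjoint from $a_1,a_2$ and the supports of the moving elements.

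\textbf{Step 3 (conclusion).} Once $A_1\in G$, Step 1 yields every $A_i$, and multiplying $A_{i+1}=(A_iA_{i+1}^{-1})^{-1}A_i$ recovers them consistently. Next, $B_1=(A\text{-word})$-conjugates, or $B_1=(B_1B_2^{-1})B_2$ with $B_2$ obtained from $A_i$ by a chain map $A_iB_j$ (\autoref{prop3.12}), lies in $G$. Together with $u_{g-1}$, $G$ then contains the full Szepietowski generating set, so $G=\Mod(N_g)$.
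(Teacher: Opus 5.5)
The paper gives no proof of this statement; it quotes it from \cite{APY}. Your reduction to a known generating set of twists plus one crosscap transposition is a reasonable frame. However, Step~2, which you correctly flag as the main obstacle and which carries all of the content, has a genuine gap.

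The trick you write down fails. Since $h(a_1)=a_1$, we get $i(a_1,x)=i(h(a_1),h(a_2))=i(a_1,a_2)$. This equals $1$ for consecutive chain curves as in your Step~1, so $x$ cannot be disjoint from $a_1$. When $a_1,a_2$ are disjoint the identity can hold, but it only returns another difference $A_2^{-1}X$. Products of such differences telescope to differences $X_1X_n^{-1}$, never to a single twist.

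The braid relation and Lemma~\ref{prop3.12} do not help either. Any derivation that uses only these relations for a pair with $i(a_i,b_j)=1$ is already valid in the braid group $B_3=\langle\sigma_1,\sigma_2\rangle$, with $A_i\mapsto\sigma_1$ and $B_j\mapsto\sigma_2$. In $B_3$ the normal closure of $\sigma_1\sigma_2^{-1}$ is the kernel of the exponent-sum map $B_3\to\mathbb{Z}$, so it does not contain $\sigma_1$. Thus your sentence ``from these an individual $A_1\in G$'' has no mechanism behind it.

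What is missing is a relation that is unbalanced between twists and inverse twists. The standard one (Korkmaz's method, and the tool the introduction alludes to for \cite{APY}) is the lantern relation. Take a four-holed sphere in $N_g$ with boundary curves $d_0,\dots,d_3$ and interior curves $x,y,z$. The relation $D_0D_1D_2D_3=XYZ$, together with the fact that each $D_i$ commutes with $X$, $Y$, $Z$ and with the other $D_j$, gives
\[
  D_0=(XD_1^{-1})(YD_2^{-1})(ZD_3^{-1}),
\]
so $D_0\in G$ once each factor is.

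Proving that each factor lies in $G$ is the real work. You must place the lantern so that each pair $(x,d_1)$, $(y,d_2)$, $(z,d_3)$ has the same topological type as a disjoint pair whose twist-difference is already in $G$, such as $(b_1,b_2)$. You must then exhibit elements of $G$ carrying one pair onto the other with local orientations matched, so that the conjugate is $XD_1^{-1}$ rather than $XD_1$ or $X^{-1}D_1$. This is where a genus bound genuinely enters; your appeal to $g\ge 7$ giving ``enough room'' is not yet attached to any argument.

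Two smaller points. First, in the paper's model $a_2$ passes through four crosscaps and $T^{k-1}(a_2)=\gamma_k$. So $T$ does not carry $a_i$ to $a_{i+1}$, and $A_1A_2^{-1}$ is not a difference of consecutive chain twists. Second, a sign mismatch cannot be repaired by multiplying by $u_i$. You would have to conjugate, which also moves other curves.
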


We can now prove \autoref{thm:main}.

\begin{proof}[Proof of \autoref{thm:main}]
Let $G$ denote the subgroup of $\Mod(N_g)$ generated by $T$ and
$u_{g-4}A_2C_2^{-1}$.
Our goal is to show that $G$ contains $A_1A_2^{-1}$, $B_1B_2^{-1}$,
and $u_{g-1}$ from \autoref{thm:base}, and hence $G = \Mod(N_g)$.

Set $G_1 = u_{g-4}A_2C_2^{-1}$.
Conjugating by $T^3$ gives
\[
  G_2 = T^3 G_1 T^{-3} = u_{g-1}\Gamma_4 B_4^{-1} \in G.
\]
Using $G_1$ and $G_2$ we form the commutator
\[
  G_3 = (G_2 G_1)\,G_2\,(G_2 G_1)^{-1} = u_{g-1} A_2 B_4^{-1} \in G,
\]
and then
\[
  G_3 G_2^{-1} = A_2\Gamma_4^{-1} \in G.
\]
Conjugating by $T$ and then by $T^3$ successively yields
\begin{align*}
  T(A_2\Gamma_4^{-1})T^{-1}   &= \Gamma_2\Gamma_5^{-1} \in G, \\
  T^3(\Gamma_2\Gamma_5^{-1})T^{-3} &= \Gamma_5\Gamma_8^{-1} \in G.
\end{align*}
Forming the product
\[
  G_4 = (\Gamma_2\Gamma_5^{-1})(\Gamma_5\Gamma_8^{-1}) = \Gamma_2\Gamma_8^{-1} \in G.
\]
Since $\gamma_2$ intersects $a_2$ (with $i(\gamma_2,a_2)=1$), by
\autoref{prop3.12} and the conjugation formula,
\[
  G_5 = (G_4 G_3)\,G_4\,(G_4 G_3)^{-1} = A_2\Gamma_8^{-1} \in G.
\]
Hence $G_4 G_5^{-1} = A_2\Gamma_2^{-1} \in G$.
Conjugating this element with the powers of $T$ gives elements of
the form $\Gamma_i\Gamma_{i+1}^{-1} \in G$ for $i = 2,\ldots,g-4$.
Telescoping these yields
\[
  G_6 = (A_2\Gamma_2^{-1})(\Gamma_2\Gamma_3^{-1})\cdots(\Gamma_5\Gamma_6^{-1})
       = A_2\Gamma_6^{-1} \in G.
\]
Conjugating $G_6$ by $G_6 G_1$ gives
\[
  G_7 = (G_6 G_1)\,G_6\,(G_6 G_1)^{-1} = A_2 C_2^{-1} \in G.
\]
We can then isolate a single crosscap transposition:
\[
  G_1 G_7^{-1} = (u_{g-4} A_2 C_2^{-1})(C_2 A_2^{-1}) = u_{g-4} \in G.
\]
Conjugating $u_{g-4}$ with powers of $T$ yields $u_j \in G$ for all
$j = 1,\ldots,g-1$.
In particular $u_{g-1} \in G$, so from $G_3 = u_{g-1}A_2B_4^{-1} \in G$
we extract $A_2 B_4^{-1} \in G$.

With $G_7 = A_2 C_2^{-1} \in G$ and $A_2 B_4^{-1} \in G$ we form
\[
  G_7^{-1}(A_2 B_4^{-1}) = C_2 B_4^{-1} \in G.
\]
Conjugating by powers of $T$:
\begin{align*}
  G_8 &= T^{-3}(C_2 B_4^{-1})T^3 = B_1 C_2^{-1} \in G, \\
  G_9 &= T^2(B_1 C_2^{-1})T^{-2} = B_2 C_3^{-1} \in G.
\end{align*}
The commutator
\[
  (G_8 G_9^{-1})\,G_8\,(G_8 G_9^{-1})^{-1} = B_1 B_2^{-1} \in G
\]
gives the third generator of \autoref{thm:base}.

Finally, conjugating $B_1 B_2^{-1}$ with powers of $T$ shows that
$A_1 C_1^{-1}$ and $C_1 C_2^{-1}$ belong to $G$.
Their product gives $A_1 C_2^{-1} \in G$, and then
\[
  (A_1 C_2^{-1})(C_2 A_2^{-1}) = A_1 A_2^{-1} \in G
\]
supplies the last generator of \autoref{thm:base}, completing the proof.
\end{proof}

\begin{figure}[htbp]
  \centering
  \includegraphics[width=0.6\linewidth]{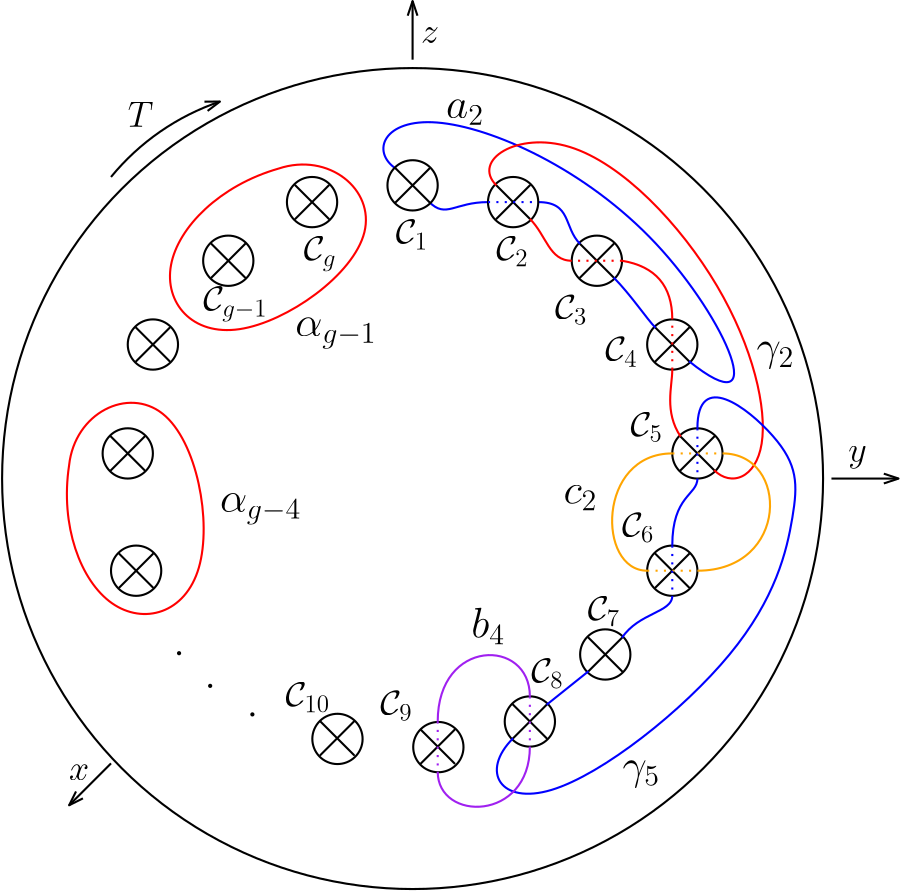}
  \caption{The rotation $T$ and some of the curves appearing in the proof of \autoref{thm:main}.}
  \label{fig:rotation}
\end{figure}

 The ideas concerning the proof for \autoref{thm:main2} are quite similar, with a subtle change; we use a different method to obtain the element $A_2 \Gamma^{-1}_2$. 
 \begin{proof}[Sketch of the Proof of \autoref{thm:main2}]
 Let $H$ denote the subgroup of $\Mod(N_{13})$ generated by $T$ and
$u_{9}A_2B_1^{-1}$.
We aim to show that $H$ contains $A_1A_2^{-1}$, $B_1B_2^{-1}$,
and $u_{12}$ from \autoref{thm:base}, and hence $H = \Mod(N_{13})$. Following the ideas of the proof of  \autoref{thm:main} closely, we obtain the elements: \begin{itemize}
    \item $H_1=u_{9}A_2B_1^{-1}$,
    \item $H_2 = T^3 H_1 T^{-3} = u_{12}\Gamma_4 C_2^{-1},$
    \item $H_3=(H_2H_1)H_2(H_2H_1)^{-1}=u_{12}A_2C_2^{-1}$,
\end{itemize}
Using these, we also get $H_3H_2^{-1}=A_2\Gamma_4^{-1}\in H.$ Conjugating this element by $T$ and then by $T^3$ successively yields
\begin{align*}
  T(A_2\Gamma_4^{-1})T^{-1}   &= \Gamma_2\Gamma_5^{-1} \in H, \\
  T^3(\Gamma_2\Gamma_5^{-1})T^{-3} &= \Gamma_5\Gamma_8^{-1} \in H.
\end{align*}
These elements form the product
\[
  H_4 = (\Gamma_2\Gamma_5^{-1})(\Gamma_5\Gamma_8^{-1}) = \Gamma_2\Gamma_8^{-1} \in H.
\]
By further conjugating $H_4$ with $T^3$, we obtain $$T^3H_4T^{-3}=\Gamma_5 \Gamma_{11}^{-1} \in H.$$ By using this element, we also form the product $$(\Gamma_2\Gamma_5^{-1})(\Gamma_5 \Gamma_{11}^{-1})=\Gamma_2\Gamma_{11}^{-1} \in H.$$ 
On the other hand, conjugating $H_3H_2^{-1}=A_2\Gamma_4^{-1}$ with $T^{-3}$ gives $$T^{-3}(A_2\Gamma_4^{-1})T^{3}=\Gamma_{11}A_2^{-1}\in H.$$ Also, product of gathered elements yields $$H_5=(\Gamma_2\Gamma_{11}^{-1})(\Gamma_{11}A_2^{-1})=\Gamma_2 A_2^{-1} \in H.$$ The rest of the proof follows as in \autoref{thm:main}, with slight adjustments; we gather the elements \begin{itemize}
    \item $H_6 = (A_2\Gamma_2^{-1})(\Gamma_2\Gamma_3^{-1})\cdots(\Gamma_5\Gamma_6^{-1})
       = A_2\Gamma_6^{-1}$,
    \item $H_7=(H_6H_3)H_6(H_6H_3)^{-1}=A_2C_2^{-1}$, 
    \item $H_3H^{-1}_7=u_{12},$
    \item $u_{9}^{-1}H_1=A_2B_1^{-1},$
    \item $H_8=(B_1A_2^{-1})H_7=B_1C_2^{-1}.$
       \end{itemize}
\end{proof}

\section*{Acknowledgements}
The authors would like to thank Altunöz, Pamuk, and Yıldız for many helpful discussions and valuable insights. 
%%------------------------------------------------------------

\end{document}